\def\cvd{~\vbox{\hrule\hbox{%
     \vrule height1.3ex\hskip0.8ex\vrule}\hrule } }
\begin{document}
\bibliographystyle{plain}

\title{Perturbation analysis of $A_{T,S}^{(2)}$ on Banach spaces}
\author{Fapeng Du\thanks{Department of mathematics, East China Normal University,
  Shanghai 200241, P.R. China and School of Mathematical \& Physical Sciences,
  Xuzhou Institute of Technology, Xuzhou 221008, Jiangsu Province, P.R. China (E-mail: jsdfp@163.com)}
 \and
 Yifeng Xue\thanks{Department of mathematics, East China Normal University, Shanghai 200241, P.R. China
 (Corresponding author, E-mail: yfxue@math.ecnu.edu.cn)}
}

\pagestyle{myheadings}
\markboth{Fapeng Du and Yifeng Xue}{Perturbation analysis of $A_{T,S}^{(2)}$ on Banach spaces}
 \maketitle

\begin{abstract}
In this paper, the perturbation problems of $A_{T,S}^{(2)}$ are considered. By virtue of the gap between subspaces,
we derive the conditions that make the perturbation of $A_{T,S}^{(2)}$ is stable when $T,S$ and $A$  have suitable
perturbations. At the same time, the explicit formulas for perturbation of $A_{T,S}^{(2)}$ and new results on perturbation
bounds are obtained.

\end{abstract}
\begin{keywords}
gap, subspaces, Banach space, group inverse
\end{keywords}
\begin{AMS}
15A09, 47A55.
\end{AMS}

\section{Introduction}

In recent years, there are many fruitful results concerning the quantitative analysis of the perturbation of the
Moore--Penrose inverses on Hilbert spaces and Drazin inverses on Banach spaces. For example, G. Chen, M. Wei and
Y. Xue gave an estimation of the perturbation bounds of the Moore--Penrose inverse on Hilbert spaces under stable
perturbation of operators, which is a generalization of the rank--preserving perturbation of matrices in \cite{CWX, XC1};
Meanwhile, many perturbation analysis results of the Drazin inverse on Banach spaces have been obtained in
\cite{GK, GKR, GKW} and \cite{K} respectively by means of the gap--function.
Recently, G. Chen and Y. Xue gave some estimations of the perturbations of the Drazin inverse on a Banach space and a
Banach algebra in \cite{Xue} and \cite{XC2} respectively under stable perturbations.

Let $X,\,Y$ be Banach spaces and let $B(X,Y)$ denote the set of bounded linear operators from $X$ to $Y$. For an operator
$A\in B(X,Y)$, let $R(A)$ and $N(A)$ denote the range and kernel of $A$, respectively. Let $T$ be a closed subspace of
$X$ and $S$ be a closed subspace of $Y$. Recall that $A_{T,S}^{(2)}$ is the unique
operator $G$ satisfying
\begin{equation}\label{1eqa}
GAG=G,\quad R(G)=T,\quad N(G)=S.
\end{equation}
It is known that (\ref{1eqa}) is equivalent to the following
condition:
\begin{equation}\label{1eqb}
N(A)\cap T=\{0\},\quad AT\dotplus S=Y
\end{equation}
(cf. \cite{DS, DSW}). It is well--known that the commonly five kinds of generalized inverses: the Moore--Penrose inverse
$A^+$, the weighted Moore--Penrose inverse $A^+_{MN}$, the Drazin inverse $A^D$, the group inverse $A^g$ and the
Bott--Duffin inverse $A^{(-1)}_{(L)}$ can be reduced to an $A_{T,S}^{(2)}$ for certain choices of $T$ and $S$ (cf.
\cite{Dj, DS, DSW}).

The perturbation analysis of $A_{T,S}^{(2)}$ have been studied by several authors (see \cite{WW, WG}, \cite{ZW1, ZW2})
when $X$ and $Y$ are of finite--dimensional. A lot of results about the error bounds have been obtained. But when $X$
and $Y$ are of infinite--dimensional, there is little known about the perturbation of $A_{T,S}^{(2)}$ if $T$, $S$ and $A$
have small perturbations respectively. In this paper, using the gap--function $\hat\delta(\cdot,\cdot)$ of two closed
subspaces, we give the the upper bounds of $\|\bar A_{T^\prime,S^\prime}^{(2)}\|$ and
$\|\bar A_{T^\prime,S^\prime}^{(2)}-A_{T,S}^{(2)}\|$ respectively. The main result is the following:

Let $A,\bar{A}=A+E\in B(X,Y)$ and let $T\subset X$, $S\subset Y$ be closed subspaces such that $A_{T,S}^{(2)}$ exists.
Let $T'\subset X$, $S'\subset Y$ be closed subspaces with $\hat{\delta}(T,T^\prime)< \dfrac{1}{(1+ \kappa )^2}$ and
$\hat{\delta}(S,S^\prime)< \dfrac{1}{3+ \kappa }$. Suppose that
$\|A_{T,S}^{(2)}\|\|E\|<\dfrac{2\kappa}{(1+\kappa)(4+\kappa)}$. Then $\bar{A}_{T^\prime,S^\prime}^{(2)}$ exists and
\begin{align*}
\|\bar{A}_{T^\prime,S^\prime}^{(2)}\|&\leq \frac{(1+\hat{\delta} (S^\prime,S))\|A^{(2)}_{T,S}\|}
{1-(1+ \kappa )\hat{\delta}(T,T^\prime )- \kappa \hat{\delta} (S^\prime,S)-
(1+\hat{\delta}(S^\prime,S))\|A^{(2)}_{T,S}\|\|E\|}.\\
\frac{\|\bar{A}_{T^\prime,S^\prime}^{(2)}-A_{T,S}^{(2)}\|}{\|A^{(2)}_{T,S}\|}&\leq \frac{(1+ \kappa )
(\hat{\delta}(T,T^\prime)+\hat{\delta} (S^\prime,S)+(1+\hat{\delta}(S^\prime,S))\|A^{(2)}_{T,S}\|\|E\|}
{1-(1+ \kappa )\hat{\delta}(T,T^\prime )- \kappa \hat{\delta} (S^\prime,S)-(1+\hat{\delta} (S^\prime,S))\|A^{(2)}_{T,S}\|
\|E\|},
\end{align*}
where $\kappa=\|A\|\|A_{T,S}^{(2)}\|$ is called the condition number of $A_{T,S}^{(2)}$. These results improve
Theorem 4.4.5 of \cite{XUE}.

\section{Preliminaries}
\setcounter{equation}{0}

Let $Z$ be a complex Banach space. Let $M,\,N$ be two closed
subspaces in $Z$. Set
$$
\delta(M,N)=\begin{cases}\sup\{dist(x,N)\,\vert\,x\in M,\,\|x\|=1\},\quad &M\not=\{0\}\\ 0 \quad& M=\{0\}
\end{cases},
$$
where $dist(x,N)=\inf\{\|x-y\|\,\vert\,y\in N\}$. The gap $\hat\delta(M,N)$ of $M,\,N$ is given by
$\hat{\delta}(M,N)=\max\{\delta(M,N),\delta(N,M)\}$. For convenience, we list some properties about $\delta(M,N)$
and $\hat\delta(M,N)$ which come from \cite{TK} as follows.

\begin{proposition}\label{2P1}
Let $M,\,N$ be closed subspaces in a Banach space $Z$. Then
\begin{enumerate}
  \item[$(1)$] $\delta(M,N)=0$ if and only if $M\subset N$.
  \item[$(2)$] $\hat{\delta}(M,N)=0$ if and only if $M=N$.
  \item[$(3)$] $\hat{\delta}(M,N)=\hat{\delta}(N,M)$.
  \item[$(4)$] $0\leq \delta(M,N)\leq 1$, $0\leq \hat{\delta}(M,N)\leq 1$.
\end{enumerate}
\end{proposition}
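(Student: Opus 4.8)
The plan is to verify all four items directly from the definitions of $\delta$ and $\hat\delta$, carrying them out in the order $(4)$, $(1)$, $(2)$, $(3)$, since $(2)$ will rely on $(1)$ while the remaining parts are self-contained. Throughout I would exploit two elementary facts about the distance function: that $N$ is a closed \emph{subspace}, so it contains $0$ and is stable under scalar multiples, and that $dist(\cdot,N)$ is positively homogeneous, i.e. $dist(\lambda x,N)=|\lambda|\,dist(x,N)$ for every scalar $\lambda$.

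For $(4)$, I would first note that each $dist(x,N)$ is nonnegative, so $\delta(M,N)$, being a supremum of nonnegative numbers (or $0$ in the degenerate case $M=\{0\}$), satisfies $\delta(M,N)\geq 0$. For the upper bound I would observe that since $N$ is a subspace we have $0\in N$, whence $dist(x,N)=\inf_{y\in N}\|x-y\|\leq\|x-0\|=\|x\|=1$ for every unit vector $x\in M$; taking the supremum gives $\delta(M,N)\leq 1$. The bounds for $\hat\delta$ then follow at once, as $\hat\delta(M,N)$ is the maximum of two quantities each lying in $[0,1]$.

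For $(1)$ I would dispose of the trivial case $M=\{0\}$ first, where both $\delta(M,N)=0$ and $M\subset N$ hold by convention. Assuming $M\neq\{0\}$, the direction $M\subset N\Rightarrow\delta(M,N)=0$ is immediate, since every unit $x\in M$ then lies in $N$ and so has $dist(x,N)=0$. Conversely, if $\delta(M,N)=0$ then $dist(x,N)=0$ for every unit vector $x\in M$; by positive homogeneity this extends to $dist(x,N)=0$ for all $x\in M$, and here is where \emph{closedness of $N$} is essential—it guarantees that $dist(x,N)=0$ forces $x\in N$, yielding $M\subset N$. This closedness hypothesis is the only genuinely non-formal ingredient, and is the step I would flag as the one requiring care.

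Finally, $(2)$ follows by combining $(1)$ with the definition of $\hat\delta$: since $\hat\delta(M,N)=\max\{\delta(M,N),\delta(N,M)\}$ is a maximum of nonnegatives, it vanishes precisely when both $\delta(M,N)=0$ and $\delta(N,M)=0$, which by $(1)$ means $M\subset N$ and $N\subset M$, i.e. $M=N$. Part $(3)$ is purely formal: the symmetry of the $\max$ operation gives $\hat\delta(M,N)=\max\{\delta(M,N),\delta(N,M)\}=\max\{\delta(N,M),\delta(M,N)\}=\hat\delta(N,M)$. No estimate here is delicate; the whole proposition is a bookkeeping exercise once the homogeneity and closedness observations used in $(1)$ are in place.
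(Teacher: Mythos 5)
Your proof is correct, but it is worth noting that the paper itself offers no proof at all for this proposition: the authors simply quote these four properties from Kato's book (reference \cite{TK}), treating them as standard facts about the gap function. Your blind attempt therefore does something genuinely different---it supplies a complete, self-contained verification from the definitions. The argument is sound throughout: the bound $\delta(M,N)\leq 1$ correctly uses $0\in N$; the forward direction of $(1)$ is immediate; and in the converse you rightly isolate the two ingredients that make it work, namely positive homogeneity of $dist(\cdot,N)$ (valid because $N$ is a subspace, so it is stable under scalar multiplication) and \emph{closedness} of $N$, which is exactly what upgrades $dist(x,N)=0$ to $x\in N$. Deriving $(2)$ from $(1)$ via the observation that a maximum of two nonnegative quantities vanishes iff both do, and $(3)$ from the symmetry of $\max$, is the natural route and matches what one finds in Kato. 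What the paper's approach buys is brevity---these are textbook facts and the citation keeps the preliminaries short; what your approach buys is self-containedness and a clear record of where each hypothesis (subspace structure, closedness) is actually used, which the bare citation obscures.
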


An operator $A\in B(Z,Z)$ is group invertible if there is $B\in B(Z,Z)$ such that
$$
ABA=A,\quad BAB=B,\quad AB=BA.
$$
The operator $B$ is called the group inverse of $A$ and is denoted by $A^g$. Clearly, $R(A^g)=R(A)$ and $N(A^g)=N(A)$.

\begin{lemma}\label{2L1}
Let $A \in B(X,Y)$. Let $T\subset X$, $S\subset Y$ be closed subspaces such that $A_{T,S}^{(2)}$ exists.
Let $G\in B(Y,X)$ be an operator with $R(G)=T$ and $N(G)=S$. Then
\begin{enumerate}
\item[$(1)$] $R(AG)=AT$, $N(AG)=S$ and $R(GA)=T$, $N(GA)\cap T=\{0\}$.
\item[$(2)$] $GA$ and $AG$ are group invertible and $A_{T,S}^{(2)}=(GA)^gG=G(AG)^g$.
\end{enumerate}
\end{lemma}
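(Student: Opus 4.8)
The plan is to treat part (1) as elementary range--kernel bookkeeping based on the equivalent existence condition \eqref{1eqb}, namely $N(A)\cap T=\{0\}$ and $AT\dotplus S=Y$, and then to deduce part (2) from the standard fact that an operator $B\in B(Z,Z)$ is group invertible precisely when $Z=R(B)\dotplus N(B)$ with $R(B)$ closed and the sum topological. For part (1) I would argue as follows. Since $R(G)=T$, we get $R(AG)=A\,R(G)=AT$ at once. For $N(AG)=S$, the inclusion $S=N(G)\subset N(AG)$ is clear, while if $AGy=0$ then $Gy\in N(A)$ and $Gy\in R(G)=T$, so $Gy\in N(A)\cap T=\{0\}$, forcing $y\in N(G)=S$. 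For $R(GA)=T$ the inclusion $R(GA)\subset R(G)=T$ is trivial, and conversely given $t\in T$ I write $t=Gy$ and decompose $y=At'+s$ with $t'\in T$, $s\in S$; then $t=Gy=GAt'+Gs=GAt'\in R(GA)$ (this also yields $GA(T)=T$, used later). Finally, if $x\in N(GA)\cap T$ then $Ax\in AT\cap S=\{0\}$, hence $x\in N(A)\cap T=\{0\}$.

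For the group invertibility in part (2) I would verify the two direct-sum decompositions. The decomposition $Y=AT\dotplus S=R(AG)\dotplus N(AG)$ is exactly \eqref{1eqb}; moreover it is a \emph{topological} direct sum with $AT$ closed, because $AW$ (with $W=A_{T,S}^{(2)}$) is a bounded idempotent with range $AT$ and kernel $S$. For $X=R(GA)\dotplus N(GA)=T\dotplus N(GA)$, the intersection $T\cap N(GA)=\{0\}$ was shown in part (1), and for any $x\in X$ I decompose $Ax=At'+s$ with $t'\in T$, $s\in S$, whence $GA(x-t')=Gs=0$ gives $x=t'+(x-t')\in T+N(GA)$. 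Since $T$ and $N(GA)$ are both closed, the closed graph theorem promotes this algebraic complement to a topological one. Hence both $GA$ and $AG$ are group invertible.

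For the formulas I would use the uniqueness of $A_{T,S}^{(2)}$ in \eqref{1eqa}. Setting $H=(GA)^gG$, the group-inverse identities give $HAH=(GA)^g(GA)(GA)^gG=(GA)^gG=H$; moreover $R(H)=(GA)^g(R(G))=(GA)^g(R(GA))=(GA)^g(GA)(X)=T$ (the range of the projection $(GA)^g(GA)$ onto $T$ along $N(GA)$), and $N(H)=\{y:Gy\in N((GA)^g)=N(GA)\}=S$, using $Gy\in T\cap N(GA)=\{0\}$. Thus $H$ satisfies all of \eqref{1eqa}, so $H=A_{T,S}^{(2)}$; the identity $G(AG)^g=A_{T,S}^{(2)}$ then follows by the symmetric computation on $Y$, using $R(AG)=AT$, $N(AG)=S$ and $G(AT)=GA(T)=T$.

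The routine part is the algebra of ranges and kernels; the one genuinely analytic step, and the place I expect the only real subtlety, is upgrading the algebraic complements to topological direct sums and thereby justifying group invertibility on a Banach space. This is precisely where the hypotheses that $T$, $S$ are closed and that $A_{T,S}^{(2)}$ exists as a \emph{bounded} operator enter, via the closed graph and open mapping theorems.
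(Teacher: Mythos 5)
Your proof is correct. The paper itself gives essentially no argument for this lemma: part (1) is dismissed in one line as a consequence of $AT\dotplus S=Y$ and $N(A)\cap T=\{0\}$, and part (2) is delegated entirely to a citation of \cite[Lemma 3.1]{Dj}. Your range--kernel bookkeeping for (1) is exactly what that one line is hiding, and your treatment of (2) --- establishing group invertibility of $AG$ and $GA$ from the topological decompositions $Y=R(AG)\dotplus N(AG)=AT\dotplus S$ and $X=R(GA)\dotplus N(GA)=T\dotplus N(GA)$ (the first witnessed by the bounded idempotent $AA_{T,S}^{(2)}$, the second upgraded from an algebraic to a topological complement of closed subspaces via the closed graph theorem), and then verifying the three conditions of (\ref{1eqa}) for $(GA)^gG$ and $G(AG)^g$ and invoking uniqueness --- is a correct, self-contained substitute for the cited lemma. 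The one genuinely analytic point, which you correctly isolate, is that group invertibility on a Banach space requires the complement of $R(B)$ by $N(B)$ to be topological rather than merely algebraic; your justifications of this for both $AG$ and $GA$ are sound, as are the computations $R((GA)^gG)=(GA)^g(R(GA))=T$, $N((GA)^gG)=S$ (using $T\cap N(GA)=\{0\}$), and the symmetric ones for $G(AG)^g$ using $G(AT)=GA(T)=T$ and $AT\cap S=\{0\}$.
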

\begin{proof} (1) Using $AT\dotplus S=Y$ and $N(A)\cap T=\{0\}$, we can obtain the assertion.

(2) The assertion follows from \cite[Lemma 3.1]{Dj}.
\end{proof}

\begin{lemma}[{\cite[Theorem 11, pp. 100]{VM}}]\label{2L2}
Let $M$ be a complemented subspace of $X$. Let $P\in B(X,X)$ be an
idempotent operator with $R(P)=M$. Let $M^\prime$ be a closed
subspace of $H$ satisfying
$\hat{\delta}(M,M^\prime)<\dfrac{1}{1+\|P\|}$. Then $M^\prime$ is
complemented, that is, $H=R(I-P)\dotplus M'$.
\end{lemma}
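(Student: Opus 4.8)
The plan is to reduce the statement to the single claim that the restriction $P|_{M'}\colon M'\to M$ is a bounded linear bijection onto $M=R(P)$. Once this is known, the explicit operator $Q=(P|_{M'})^{-1}P$ does the rest: its inverse factor is bounded (by the open mapping theorem, since $M'$ and $M$ are closed, hence Banach), so $Q\in B(X,X)$, and a direct check shows $Q^2=Q$, $R(Q)=M'$ and $N(Q)=N(P)=R(I-P)$. A bounded idempotent with range $M'$ and kernel $R(I-P)$ is exactly a witness of the topological decomposition $X=R(I-P)\dotplus M'$. Thus the whole task is to prove that $P|_{M'}$ is injective and surjective.

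For injectivity I would argue directly from the gap. Fix $x\in M'$ with $Px=0$, i.e. $x\in N(P)$. For every $m\in M$ one has $Pm=m$, so $m=Pm-Px=P(m-x)$ and hence $\|m\|\le\|P\|\,\|m-x\|$; combining with the triangle inequality gives $\|x\|\le\|x-m\|+\|m\|\le(1+\|P\|)\,\|x-m\|$ for all $m\in M$, that is, $\|x\|\le(1+\|P\|)\,dist(x,M)$. Since $x\in M'$, the definition of $\delta$ yields $dist(x,M)\le\delta(M',M)\,\|x\|\le\hat\delta(M,M')\,\|x\|$, so $\|x\|\le(1+\|P\|)\hat\delta(M,M')\,\|x\|$. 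The hypothesis $\hat\delta(M,M')<\frac{1}{1+\|P\|}$ makes this coefficient strictly less than $1$, forcing $x=0$. This already gives $M'\cap R(I-P)=\{0\}$, the ``direct'' part of the sum.

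The surjectivity of $P|_{M'}$ onto $M$ is the main obstacle, and I would obtain it by a Neumann-type iteration exploiting completeness of the closed subspace $M'$ together with the other gap direction $\delta(M,M')$. Given $m\in M$, choose $x_0\in M'$ with $\|m-x_0\|$ close to $dist(m,M')\le\delta(M,M')\,\|m\|$, and set $m_1=m-Px_0\in M$. Since $m=Pm$, we have $m_1=P(m-x_0)$, so $\|m_1\|\le\|P\|\,\|m-x_0\|\le\theta\,\|m\|$, where $\theta$ is taken slightly above $\|P\|\,\delta(M,M')<\frac{\|P\|}{1+\|P\|}<1$. Iterating on $m_1,m_2,\dots$ produces $x_k\in M'$ and residuals $m_{k+1}=m_k-Px_k$ with $\|m_{k+1}\|\le\theta\,\|m_k\|$ and $\|x_k\|\le(1+\delta(M,M'))\|m_k\|\le C\theta^{k}\|m\|$. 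Hence $\sum_k x_k$ converges in $M'$ to some $x\in M'$, and the telescoping identity $Px_k=m_k-m_{k+1}$ gives $Px=\sum_k(m_k-m_{k+1})=m$ because $m_k\to0$. Therefore $PM'=M$.

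Finally I would assemble the pieces. Surjectivity gives, for an arbitrary $x\in X$, an element $m'\in M'$ with $Pm'=Px$, whence $x-m'\in N(P)=R(I-P)$ and $x=m'+(x-m')$, so $X=M'+R(I-P)$; together with $M'\cap R(I-P)=\{0\}$ this is the algebraic direct sum, upgraded to a topological one by the bounded idempotent $Q$. The single delicate point is keeping the iteration's contraction constant below $1$, which is precisely where the sharp threshold $\frac{1}{1+\|P\|}$ enters, through $\|P\|\,\delta(M,M')<\|P\|/(1+\|P\|)<1$; note that both gap directions $\delta(M',M)$ and $\delta(M,M')$ are used, which is why the symmetric quantity $\hat\delta(M,M')=\max\{\delta(M,M'),\delta(M',M)\}$ appears in the hypothesis.
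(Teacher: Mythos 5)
Your proof is correct. One important point of comparison: the paper does not prove this lemma at all --- it is quoted as a known result from M\"{u}ller's book (\cite[Theorem 11, pp.\ 100]{VM}) --- so your argument is a genuine self-contained derivation rather than a variant of anything in the paper. Its structure is sound and is essentially the standard one: (i) injectivity of $P|_{M'}$, equivalently $M'\cap R(I-P)=\{0\}$, follows from the estimate $\|x\|\leq(1+\|P\|)\,dist(x,M)\leq(1+\|P\|)\,\delta(M',M)\,\|x\|$, which uses one direction of the gap; (ii) surjectivity of $P|_{M'}$ onto $M$ comes from your geometric iteration, which uses the other direction $\delta(M,M')$ and the completeness of the closed subspace $M'$, with contraction constant $\|P\|(\delta(M,M')+\epsilon)<\|P\|/(1+\|P\|)+\|P\|\epsilon<1$ for $\epsilon$ small; (iii) the inverse mapping theorem then makes $(P|_{M'})^{-1}$ bounded, so $Q=(P|_{M'})^{-1}P$ is a bounded idempotent with $R(Q)=M'$ and $N(Q)=N(P)=R(I-P)$, which is precisely the topological decomposition $X=R(I-P)\dotplus M'$ (you correctly read the statement's ``$H$'' as a typo for $X$). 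Two cosmetic remarks: the bound $\|x_k\|\leq(1+\delta(M,M'))\|m_k\|$ should carry the $\epsilon$ as well, harmlessly absorbed into your constant $C$; and strictly speaking the explicit $Q$ is a luxury, since an algebraic direct sum of two closed subspaces of a Banach space is automatically topological by the closed graph theorem --- but exhibiting the bounded projection is cleaner and matches how the lemma is used later in the paper.
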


Let $A \in B(X,Y)$. Let $T\subset X$ and $S\subset Y$ be closed subspaces such that $A^{(2)}_{T,S}$ exists.
Put $\kappa=\|A\|\|A^{(2)}_{T,S}\|$. The symbol $\kappa$ will be used throughout the paper.

\begin{lemma}\label{2L3}
Let $A \in B(X,Y)$. Let $T\subset X$ and $S\subset Y$ be closed subspaces such that $A_{T,S}^{(2)}$ exists.
Let $T'$ be a closed subspace of $X$ such that $\hat{\delta}(T,T^\prime)< \dfrac{1}{1+\kappa}$.
Then
\begin{enumerate}
\item[$(1)$] $\hat\delta(AT,AT^\prime)\leq \dfrac{\kappa\,\hat\delta(T,T^\prime)}{1-(1+\kappa)\hat\delta(T,T^\prime)}$.
\item[$(2)$] $N(A)\cap T'=\{0\}$.
\end{enumerate}
\end{lemma}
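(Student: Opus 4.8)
The plan is to reduce everything to the idempotent $P = A_{T,S}^{(2)} A$. Writing $G = A_{T,S}^{(2)}$, the defining relation $GAG = G$ gives $P^2 = GAGA = (GAG)A = GA = P$, so $P$ is idempotent; by Lemma \ref{2L1}(1) its range is $R(P) = R(GA) = T$, and $\|P\| \le \|G\|\|A\| = \kappa$. Two consequences will carry the whole argument. First, since $Px = x$ for $x \in T$, we obtain the lower bound $\|x\| = \|GAx\| \le \|G\|\|Ax\|$ for every $x \in T$, i.e. $A$ is bounded below on $T$ with constant $1/\|G\|$. Second, $Ax = 0$ forces $Px = GAx = 0$, so $N(A) \subseteq N(P)$.

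For part (2), I would invoke Lemma \ref{2L2} with this $P$: since $\|P\| \le \kappa$ we have $\hat\delta(T,T') < \tfrac{1}{1+\kappa} \le \tfrac{1}{1+\|P\|}$, so $X = R(I-P)\dotplus T' = N(P)\dotplus T'$; in particular $N(P)\cap T' = \{0\}$. Combining with $N(A)\subseteq N(P)$ gives $N(A)\cap T' \subseteq N(P)\cap T' = \{0\}$, which is the claim.

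For part (1) I would estimate $\delta(AT,AT')$ and $\delta(AT',AT)$ separately and take the maximum. The direction $\delta(AT,AT')$ is the easy one: given $y = Ax \in AT$ with $\|y\|=1$ and $x\in T$, the lower bound gives $\|x\|\le\|G\|$; choosing $x'\in T'$ with $\|x-x'\|$ arbitrarily close to $\mathrm{dist}(x,T')\le\delta(T,T')\|x\|$ yields $Ax'\in AT'$ with $\|y-Ax'\|\le\|A\|\|x-x'\|$, hence $\mathrm{dist}(y,AT')\le\kappa\,\delta(T,T')$, so $\delta(AT,AT')\le\kappa\,\hat\delta(T,T')$, which is dominated by the asserted bound since the denominator is at most $1$.

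The main obstacle is the reverse direction $\delta(AT',AT)$, where the denominator is genuinely needed: here $x'$ lives in the perturbed space $T'$, so the clean lower bound on $T$ is unavailable and must be transported to $T'$. I would first prove that $A$ is bounded below on $T'$ with a slightly worse constant. Fix $x'\in T'$ and pick $t\in T$ with $\|x'-t\|$ close to $\mathrm{dist}(x',T)\le\delta(T',T)\|x'\|$; then $\|t\|\le\|G\|\|At\|\le\|G\|(\|Ax'\|+\|A\|\|x'-t\|)$, and $\|x'\|\le\|t\|+\|x'-t\|$ combine, after rearranging, to give
\begin{equation*}
\|x'\|\le\frac{\|G\|\,\|Ax'\|}{1-(1+\kappa)\delta(T',T)},
\end{equation*}
the denominator being positive because $\delta(T',T)\le\hat\delta(T,T')<\tfrac{1}{1+\kappa}$. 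Now for $y'=Ax'\in AT'$ with $\|y'\|=1$, the same $t\in T$ gives $At\in AT$ and $\mathrm{dist}(y',AT)\le\|A\|\|x'-t\|\le\|A\|\delta(T',T)\|x'\|$; inserting the bound on $\|x'\|$ produces $\delta(AT',AT)\le\frac{\kappa\,\delta(T',T)}{1-(1+\kappa)\delta(T',T)}$. Taking the maximum of the two directions and bounding $\delta(T,T')$ and $\delta(T',T)$ by $\hat\delta(T,T')$ yields the stated estimate. The one point to watch is the order of quantifiers in the two $\mathrm{dist}$-infima (the $\varepsilon$-approximations), which must be handled before passing to the supremum over unit vectors.
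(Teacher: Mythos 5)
Your proof is correct and, for part (1) --- the substantive half of the lemma --- follows the paper's argument essentially verbatim: the same easy estimate $\delta(AT,AT')\le\kappa\,\hat\delta(T,T')$ via $\|x\|\le\|A_{T,S}^{(2)}\|\|Ax\|$ on $T$, and the same transported lower bound $[1-(1+\kappa)\delta(T',T)]\|x'\|\le\|A_{T,S}^{(2)}\|\|Ax'\|$ on $T'$ (the paper's (\ref{2eqa})) for the reverse direction. For part (2) you detour through Lemma \ref{2L2} and the inclusion $N(A)\subseteq N(A_{T,S}^{(2)}A)$, whereas the paper reads the conclusion directly off (\ref{2eqa}); both are immediate, and indeed your own displayed lower bound on $\|x'\|$ already yields (2) without invoking Lemma \ref{2L2}.
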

\begin{proof} (1) First we show $\delta(AT,AT^\prime)\leq\|A\|\|A_{T,S}^{(2)}\| \delta(T,T^\prime)\leq\kappa\,
\hat\delta(T,T^\prime)$.

Let $x\in T$. Then $x=A_{T,S}^{(2)}Ax$ and $\|x\|\leq\|A_{T,S}^{(2)}\|\|Ax\|$. For any $y\in T^\prime$, we have
$\|Ax-Ay\|\leq \|A\|\|x-y\|$. So
\begin{align*}
dist(Ax,AT^\prime)&=\inf_{y\in T^\prime}\|Ax-Ay\|\leq \|A\|\inf_{y\in T^\prime}\|x-y\|\\
&=\|A\|dist(x,T^\prime)\leq\|A\|\|x\|\delta(T,T')\\
&\leq\|A\|\|A_{T,S}^{(2)}\|\|Ax\|dist(T,T').
\end{align*}
This means that $\delta(AT,AT^\prime)\leq \kappa\,\delta(T,T^\prime)\leq\kappa\,\hat\delta(T,T^\prime)$.

Next we show $\delta(AT^\prime,AT)\leq \dfrac{\kappa\,\hat\delta(T,T')}{1-(1+\kappa)\hat\delta(T,T')}$
when $\hat{\delta}(T,T^\prime)< \dfrac{1}{1+\kappa}.$

For each $x^\prime \in T^\prime$ and $x\in T$, we have
\begin{align*}
\|Ax^\prime\|&=\|A(x^\prime-x+x)\|\geq\|Ax\|-\|A\|\|x^\prime-x\| \\
&\geq \|A_{T,S}^{(2)}\|^{-1}\|x\|-\|A\|\|x^\prime-x\| \\
&\geq \|A_{T,S}^{(2)}\|^{-1}\|x^\prime\|-\|A_{T,S}^{(2)}\|^{-1}\|x^\prime-x\|-\|A\|\|x^\prime-x\|\\
&\geq \|A_{T,S}^{(2)}\|^{-1}\|x^\prime\|-(\|A_{T,S}^{(2)}\|^{-1}+\|A\|)\|x^\prime-x\|,
\end{align*}
Thus,
$$
(\|A_{T,S}^{(2)}\|^{-1}+\|A\|)\|x^\prime-x\|\geq \|A_{T,S}^{(2)}\|^{-1}\|x^\prime\|-\|Ax^\prime\|
$$
and consequently,
$$
\|A_{T,S}^{(2)}\|^{-1}\|x^\prime\|-\|Ax^\prime\|\leq\|x^\prime\|(\|A_{T,S}^{(2)}\|^{-1}+\|A\|)\delta(T',T),
$$
that is,
\begin{equation}\label{2eqa}
\|A_{T,S}^{(2)}\|\|Ax^\prime\|\geq \big[1-(1+\|A\|\|A_{T,S}^{(2)}\|)\delta(T',T)\big]\|x'\|.
\end{equation}
Therefore,
\begin{align*}
dist(Ax^\prime, AT)&\leq \|A\|dist(x^\prime,T)\leq\|A\|\|x'\|\delta(T',T)\\
&\leq\frac{\|A\|\|Ax'\|\|A_{T,S}^{(2)}\|\hat\delta(T,T')}{1-(1+\|A\|\|A_{T,S}^{(2)}\|)\hat\delta(T,T')},
\end{align*}
i.e., $\delta(AT',AT)\leq\dfrac{\kappa\,\hat\delta(T,T')}{1-(1+\kappa)\hat\delta(T,T')}$
when $\hat{\delta}(T,T^\prime)<\dfrac{1}{1+\kappa}.$

The final assertion follows from above arguments.

(2) From (\ref{2eqa}), we get that $N(A)\cap T'=\{0\}$.
 \end{proof}

\section{Main results}
\setcounter{equation}{0}
.
\begin{lemma}\label{3L1}
Let $A \in B(X,Y)$ and let $T\subset X$, $S\subset Y$ be closed subspaces such that $A_{T,S}^{(2)}$ exists.
Let $T'$ be closed subspace in $X$ with $\hat\delta(T,T^\prime)<\dfrac{1}{(1+ \kappa )^2}$. Then $A_{T^\prime,S}^{(2)}$
exists and
\begin{enumerate}
\item[$(1)$] $A_{T^\prime,S}^{(2)}=A_{T,S}^{(2)}+(I-A_{T,S}^{(2)}A)F(I+(AG)^gAF)^{-1}(AG)^g$, where
$G,\,H \in B(Y,X)$ are arbitrary operators such that
$$
R(G)=T,\ R(H)=T^\prime,\ N(G)=N(H)=S\ \text{and}\ F=H-G.
$$
\item[$(2)$] $\|A^{(2)}_{T^\prime ,S}-A^{(2)}_{T,S}\|\leq
       \dfrac{(1+ \kappa )\hat{\delta}(T,T^\prime )}{1-(1+ \kappa )\hat{\delta}(T,T^\prime)}\|A^{(2)}_{T,S}\|$.
\item[$(3)$] $\|A^{(2)}_{T^\prime,S}\|\leq \dfrac{\|A^{(2)}_{T,S}\|}{1-(1+ \kappa )\hat{\delta}(T,T^\prime)}$.
\end{enumerate}
\end{lemma}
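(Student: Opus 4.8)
The plan is to establish existence of $A_{T',S}^{(2)}$ first, and then treat the three assertions in order. For existence I must verify the two conditions in (\ref{1eqb}) for the pair $(T',S)$. The condition $N(A)\cap T'=\{0\}$ is immediate from Lemma~\ref{2L3}(2), since $\hat\delta(T,T')<\frac{1}{(1+\kappa)^2}\le\frac{1}{1+\kappa}$. For the splitting $AT'\dotplus S=Y$ I would apply Lemma~\ref{2L2} to the idempotent $P=AA_{T,S}^{(2)}$, which is the projection of $Y$ onto $AT$ along $S$ (one checks $(AA_{T,S}^{(2)})^2=AA_{T,S}^{(2)}$ from $A_{T,S}^{(2)}AA_{T,S}^{(2)}=A_{T,S}^{(2)}$, $R(P)=AT$, $N(P)=S$) and satisfies $\|P\|\le\|A\|\|A_{T,S}^{(2)}\|=\kappa$, so that $R(I-P)=S$. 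Lemma~\ref{2L2} requires $\hat\delta(AT,AT')<\frac{1}{1+\|P\|}$, for which $\hat\delta(AT,AT')<\frac{1}{1+\kappa}$ suffices. Feeding the estimate of Lemma~\ref{2L3}(1) into this, a short computation shows that $\frac{\kappa\,\hat\delta(T,T')}{1-(1+\kappa)\hat\delta(T,T')}<\frac{1}{1+\kappa}$ is \emph{exactly} equivalent to $\hat\delta(T,T')<\frac{1}{(1+\kappa)^2}$; this is precisely why the threshold in the statement takes that form. Lemma~\ref{2L2} then gives $Y=S\dotplus AT'$, and together with $N(A)\cap T'=\{0\}$, condition (\ref{1eqb}) yields the existence of $A_{T',S}^{(2)}$.

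For part (1) I would first record the algebraic identities that drive the computation. By Lemma~\ref{2L1}(2), $A_{T,S}^{(2)}=G(AG)^g$; the operator $(AG)^g AG=AG(AG)^g$ is the idempotent onto $R(AG)=AT$ along $N(AG)=S$; and since $A_{T,S}^{(2)}A$ is the idempotent onto $T$ along $A^{-1}(S)$ while $R(G)=T$, one has $A_{T,S}^{(2)}AG=G$, which in turn gives the simplification $(I-A_{T,S}^{(2)}A)F=(I-A_{T,S}^{(2)}A)H$. The next task is to show that $\Phi:=I+(AG)^g AF$ is invertible, which I would do via a Neumann series after bounding $\|(AG)^g AF\|<1$, the bound coming from the smallness of $\hat\delta(T,T')$ through Lemma~\ref{2L3}. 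Writing $W$ for the claimed right-hand side, I then verify the three defining relations $WAW=W$, $R(W)=T'$, $N(W)=S$ and invoke uniqueness. The inclusion $S\subseteq N(W)$ is transparent once one factors $W=\big[G+(I-A_{T,S}^{(2)}A)F\Phi^{-1}\big](AG)^g$, since $N((AG)^g)=S$; the relation $WAW=W$ is a direct but lengthy manipulation using the identities above; and for $R(W)=T'$ I would show $R(W)\subseteq T'$ and that $AW$ is the idempotent onto $AT'$ along $S$, so that $A\,R(W)=AT'$ combined with $N(A)\cap T'=\{0\}$ forces $R(W)=T'$. Then $N(W)=S$ follows automatically, because $N(W)=N(AW)$ and $AW$ is an idempotent with range $AT'$, so its kernel is the unique complement of $AT'$ already containing $S$.

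I anticipate that the invertibility of $\Phi$ and the identification $R(W)=T'$ will be the main obstacle, as both rest on converting the geometric smallness of $\hat\delta(T,T')$, and hence of $\hat\delta(AT,AT')$, into genuine operator-norm control of $(AG)^g AF$. Here the freedom in choosing $G$ clarifies matters: taking $G=A_{T,S}^{(2)}$ makes $(AG)^g=AA_{T,S}^{(2)}$ idempotent with $\|(AG)^g\|\le\kappa$, which streamlines all the estimates while leaving the (choice-independent) conclusion unchanged.

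Finally, for parts (2) and (3) I would estimate the correction term supplied by part (1). From $A_{T',S}^{(2)}-A_{T,S}^{(2)}=(I-A_{T,S}^{(2)}A)F\,\Phi^{-1}(AG)^g$ I bound $\|\Phi^{-1}\|\le\big(1-\|(AG)^g AF\|\big)^{-1}$ by the Neumann series, control $\|(AG)^g\|$ by $\kappa$, and estimate the remaining perturbation factors through Lemma~\ref{2L3} and the lower bound (\ref{2eqa}); the arithmetic is arranged so that the denominator collapses to $1-(1+\kappa)\hat\delta(T,T')$ and the numerator to $(1+\kappa)\hat\delta(T,T')\|A_{T,S}^{(2)}\|$, which is assertion (2). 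Assertion (3) then follows at once from (2) by the triangle inequality $\|A_{T',S}^{(2)}\|\le\|A_{T,S}^{(2)}\|+\|A_{T',S}^{(2)}-A_{T,S}^{(2)}\|$ and simplification of the resulting fraction. The quantitative heart of (2)–(3) is again the passage from the gap $\hat\delta(T,T')$ to norm bounds on the factors of the correction term, for which Lemma~\ref{2L3} and inequality (\ref{2eqa}) are the essential tools.
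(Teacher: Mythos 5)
Your existence argument is exactly the paper's (apply Lemma~\ref{2L2} to $P=AA_{T,S}^{(2)}$ with $\|P\|\le\kappa$, feed in Lemma~\ref{2L3}(1), and observe that the threshold $\hat\delta(T,T')<\frac{1}{(1+\kappa)^2}$ is precisely what makes $\hat\delta(AT,AT')<\frac{1}{1+\kappa}$), and your auxiliary identities such as $A_{T,S}^{(2)}AG=G$ are correct. But there is a genuine gap at the technical heart of the lemma: you propose to prove the invertibility of $\Phi=I+(AG)^gAF$ by a Neumann series after bounding $\|(AG)^gAF\|<1$. This cannot work, because $H$ is an \emph{arbitrary} operator with $R(H)=T'$ and $N(H)=S$; replacing $H$ by $\lambda H$ ($\lambda\neq 0$) preserves these constraints but makes $\|F\|=\|H-G\|$, and hence $\|(AG)^gAF\|$, arbitrarily large. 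The smallness of $\hat\delta(T,T')$ controls the geometry of the subspaces, not the norm of $F$. (Your remark about choosing $G=A_{T,S}^{(2)}$ does not help: the uncontrolled factor is $H$, and the statement asserts the formula for all admissible $H$.) The paper avoids this entirely by proving injectivity and surjectivity of $I+(AG)^gAF$ directly from the decompositions $AT\dotplus S=Y=AT'\dotplus S$: any $y$ in the kernel lies in $AT$, whence $(AG)^gAHy=0$, whence $AHy\in AT'\cap S=\{0\}$ and $y\in N(AH)\cap AT=S\cap AT=\{0\}$; surjectivity is an explicit construction. No norm bound on $F$ is ever used.

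The same defect propagates to your plan for parts (2) and (3): estimating the correction term $(I-A_{T,S}^{(2)}A)F\Phi^{-1}(AG)^g$ through $\|\Phi^{-1}\|\le(1-\|(AG)^gAF\|)^{-1}$ and norm bounds on the factors again requires control of $\|F\|$ that is not available. The paper's proof of (2) is structurally different and does not pass through the formula in (1) at all: for $\xi\in Y=AT'\dotplus S$ it writes $\xi=u+u'$ with $u=AA_{T',S}^{(2)}x$, uses $dist(A_{T',S}^{(2)}x,T)\le\|A_{T',S}^{(2)}x\|\delta(T',T)$ to pick an approximant $A_{T,S}^{(2)}y$, obtains $\|W\xi\|\le(1+\kappa)\|A_{T',S}^{(2)}x\|\delta(T',T)+(1+\kappa)\epsilon$, and then closes the estimate self-referentially via $\|A_{T',S}^{(2)}x\|\le\|A_{T,S}^{(2)}\|\|\xi\|+\|W\xi\|$. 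You would need to replace your norm-based plan for $\Phi^{-1}$ and for the estimates (2)--(3) with arguments of this geometric type; as written, the proposal does not go through.
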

{\em Proof.}
Put $P_{AT,S}=AA_{T,S}^{(2)}$. Then $P_{AT,S}$ is an idempotent operator onto $AT$ along $S$. By Lemma \ref{2L3} (1),
we have
$$
\hat{\delta}(AT,AT^\prime)\leq \frac{ \kappa\, \hat{\delta}(T,T^\prime)}{1-(1+ \kappa )\hat{\delta}(T,T^\prime)}
< \frac{1}{1+ \kappa }\leq \frac{1}{1+\|P_{AT,S}\|},
$$
when $\hat{\delta}(T,T^\prime)< \dfrac{1}{(1+ \kappa )^2}.$ So $AT^\prime$ is complemented and
$AT^\prime \dotplus S=Y$ by Lemma \ref{2L2}. Consequently, $A_{T^\prime,S}^{(2)}$ exists by Lemma \ref{2L3} (2).

Let $G,\,H\in B(Y,X)$ with $R(G)=T$, $N(G)=N(H)=S$ and $R(H)=T'$. Then by Lemma \ref{2L1}, we have
$$
A_{T,S}^{(2)}=G(AG)^g=(GA)^gG,\quad A_{T',S}^{(2)}=H(AH)^g=(HA)^gH.
$$
Put $F=H-G$. Then $S\subseteq N(F)$.

Now we show that $I+(AG)^gAF$ is invertible. Let $y\in N(I+(AG)^gAF)$.  Then
$$
y=-(AG)^gAFy \in R((AG)^g)=R(AG)=AT.
$$
Hence
$$
AA_{T,S}^{(2)}y=y=-(AG)^gAFy=AA_{T,S}^{(2)}y-(AG)^gAHy.
$$
So $(AG)^gAHy=0$. This indicates that
$$
AHy \in R(AH) \cap N((AG)^g)=AT^\prime \cap S =\{0\}.
$$
From $AHy=0$, we get that $y \in N(AH)\cap AT=S \cap AT=\{0\}$, i.e. $y=0$.
Therefore $I+(AG)^gAF$ is injective.

Note that $N((AG)^g)=S$ and $AT'\dotplus S=Y$. So
$$
AT=R(AG)=R((AG)^g)=(AG)^gAT'=R((AG)^gAH)
$$
and consequently, for any $y \in Y=S\dotplus AT$, there is $y_1 \in S$ and $y_2 \in R((AG)^gAH)$ such that
$y=y_1+y_2$. Choose $z\in Y$ such that $y_2 =(AG)^gAHz$. Write $z=z_1+z_2$ where $z_1\in AT$ and $z_2\in S$.
Since $N(H)=S$, $y_2 =(AG)^gAHz_1$. Set $\xi=y_1+z_1$. Then
\begin{align*}
(I+(AG)^gAF)\xi&=(I-AA_{T,S}^{(2)}+(AG)^gAH)\xi \\
&=(I-AA_{T,S}^{(2)})\xi+(AG)^gAH\xi=y_1 +(AG)^gAHz_1\\
&=y_1+y_2=y,
\end{align*}
that is, $I+(AG)^gAF$ is surjective. Therefore, $I+(AG)^gAF$ is invertible and $I+AF(AG)^g$ is invertible too.

Put
$$
D=A_{T,S}^{(2)}+(I-A_{T,S}^{(2)}A)F(I+(AG)^gAF)^{-1}(AG)^g.
$$
It is easy to verify that $DAD=D$ and $N(D)=S$. Since $(I + (AG)^gAF)^{-1}(AG)^g=(AG)^g(I + AF(AG)^g)^{-1}$ and
\begin{align*}
D&=A_{T,S}^{(2)}+(I-A_{T,S}^{(2)}A)F(I+(AG)^gAF)^{-1}(AG)^g\\
&=G(AG)^g+(I-G(AG)^gA)F(I+(AG)^gAF)^{-1}(AG)^gAG(AG)^g\\
&=(G+G(AG)^gAF+F-G(AG)^gAF)(I+(AG)^gAF)^{-1}(AG)^g\\
&=H(I+(AG)^gAF)^{-1}(AG)^g\\
&=H(AG)^g(I+AF(AG)^g)^{-1}
\end{align*}
by Lemma \ref{2L1} (2), we have that
$$
R(D)=R(H(AG)^g)=H(AT)=H(AT \dotplus S)=R(H)=T^\prime.
$$
Thus $A^{(2)}_{T^\prime ,S}=D$.

Put $W=A^{(2)}_{T^\prime ,S}-A^{(2)}_{T ,S}$.  For any $\xi \in Y=AT'\dotplus S$, there is $u\in AT'$ and $u'\in S$
such that $\xi = u+u'$. Choose $x\in Y$ such that $u=AA^{(2)}_{T',S}x$. Since
$dist(A^{(2)}_{T',S}x, T)\leq\|A^{(2)}_{T^\prime,S}x\|\delta(T',T)$, for every $\epsilon>0$,
we can find $y\in Y$ such that
$$
\|A^{(2)}_{T^\prime,S}x-A^{(2)}_{T,S}y \|<\|A^{(2)}_{T',S}x\|\delta(T',T)+\epsilon.
$$
Set $v=AA^{(2)}_{T,S}y $. Then
$$
\|u-v\|=\|AA^{(2)}_{T^\prime,S}x - AA^{(2)}_{T,S}y\|<\|A\|\|A^{(2)}_{T^\prime,S}x\|\delta(T',T)+\|A\|\epsilon.
$$
Consequently,
\begin{align}
\|W\xi\|&=\|Wu\|=\|A^{(2)}_{T^\prime,S}u- A^{(2)}_{T,S}u\|\notag \\
&\leq \|A^{(2)}_{T^\prime,S}u- A^{(2)}_{T,S}v\|+\| A^{(2)}_{T,S}u- A^{(2)}_{T,S}v\|\notag \\
&\leq \|A^{(2)}_{T^\prime,S}x-A^{(2)}_{T,S}y \| + \|A^{(2)}_{T,S}\|\|u-v\|\notag \\
\label{3eqa}& \leq(1+ \kappa )\|A^{(2)}_{T^\prime,S}x\|\delta(T',T)+(1+\kappa)\epsilon.
\end{align}
Since
\begin{equation}\label{3eqb}
\|A^{(2)}_{T^\prime,S}x\|=\|A^{(2)}_{T^\prime,S}u\|=\|W\xi +A^{(2)}_{T,S}\xi\|\leq \|A^{(2)}_{T,S}\|\|\xi\| +\|W\xi\|,
\end{equation}
it follows from (\ref{3eqa}) and (\ref{3eqb}) that
$$
\|W\xi\|\leq (1+ \kappa )(\|A^{(2)}_{T,S}\|\|\xi\| +\|W\xi\|)\delta(T',T)+(1+\kappa)\epsilon.
$$
and hence $\|W\xi\|\leq \dfrac{(1+ \kappa )\delta(T',T)}{1-(1+ \kappa )\delta(T',T)}\|A^{(2)}_{T,S}\|\|\xi\|$
by letting $\epsilon\to 0^+$. Therefore,
$$
\|A^{(2)}_{T^\prime ,S}-A^{(2)}_{T ,S}\|\leq
\dfrac{(1+ \kappa )\hat\delta(T',T)}{1-(1+ \kappa )\hat\delta(T',T)}\|A^{(2)}_{T,S}\|.
$$
Furthermore,
\begin{align*}
\|A^{(2)}_{T^\prime,S}\|&=\|W+A^{(2)}_{T,S}\|\leq \|W\|+\|A^{(2)}_{T,S}\| \\
&\leq \frac{(1+ \kappa )\hat{\delta}(T^\prime , T)}{1-(1+ \kappa )\hat{\delta}(T^\prime , T)}\|A^{(2)}_{T,S}\| +\|A^{(2)}_{T,S}\| \\
&=\frac{\|A^{(2)}_{T,S}\|}{1-(1+ \kappa )\hat{\delta}(T^\prime , T)}.~~~~~~\cvd
\end{align*}

\begin{lemma}\label{3L2}
Let $A \in B(X,Y)$ and let $T\subset X$, $S\subset Y$ be closed subspaces such that $A_{T,S}^{(2)}$ exists.
Let $S'$ be a closed subspace in $Y$ such that $\hat{\delta}(S,S')< \dfrac{1}{2+ \kappa }$. Then $A_{T,S^\prime}^{(2)}$ exists
and
\begin{enumerate}
\item[$(1)$] $A_{T,S^\prime}^{(2)}=A_{T,S}^{(2)}+A_{T,S}^{(2)}(I+(AG)^gAF)^{-1}(AG)^gAF(I-AA_{T,S}^{(2)})$, where
$F=H-G$ and $G,\,H \in B(Y,X)$ are arbitrary with $R(G)=R(H)=T$, $N(G)=S$ and $N(H)=S^\prime$.
\item[$(2)$] $\|A_{T,S}^{(2)}-A_{T,S^\prime}^{(2)}\|\leq \dfrac{(1+ \kappa )\hat{\delta} (S^\prime,S)}
{1- \kappa\, \hat{\delta} (S^\prime,S)  }\|A_{T,S}^{(2)}\|$.
\item[$(3)$] $\|A_{T,S^\prime}^{(2)}\|\leq \dfrac{1+\hat{\delta }(S^\prime,S)}{1-\kappa\,\hat{\delta} (S^\prime,S) }
\|A_{T,S}^{(2)}\|$.
\end{enumerate}
\end{lemma}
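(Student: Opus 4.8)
The plan is to run the argument of Lemma~\ref{3L1} with the roles of range and kernel interchanged, since here it is the kernel $S$ that is perturbed while $T$ stays fixed. I would begin with existence: the two conditions in $(\ref{1eqb})$ must be checked for $A_{T,S'}^{(2)}$. Since $T$ is unchanged, $N(A)\cap T=\{0\}$ is inherited from the existence of $A_{T,S}^{(2)}$, so only the splitting $AT\dotplus S'=Y$ needs proof. Here I would apply Lemma~\ref{2L2} to the idempotent $Q=I-AA_{T,S}^{(2)}$, which projects $Y$ onto $S$ along $AT$, so that $R(Q)=S$ and $R(I-Q)=AT$. As $\|Q\|\le 1+\|AA_{T,S}^{(2)}\|\le 1+\kappa$, the hypothesis $\hat\delta(S,S')<\frac{1}{2+\kappa}\le\frac{1}{1+\|Q\|}$ puts us in the range of Lemma~\ref{2L2} and yields $Y=R(I-Q)\dotplus S'=AT\dotplus S'$; by $(\ref{1eqb})$, $A_{T,S'}^{(2)}$ exists.

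For the formula (1), with $G,H,F$ as in the statement I would use $(AG)^gAG=AA_{T,S}^{(2)}$ to rewrite $\Phi:=I+(AG)^gAF=I-AA_{T,S}^{(2)}+(AG)^gAH$, the same operator met in Lemma~\ref{3L1}, and prove it invertible. The cleanest route is to read $\Phi$ through the splitting $Y=AT\dotplus S$: since $I-AA_{T,S}^{(2)}$ kills $AT$ and is the identity on $S$, while $(AG)^gAH$ maps into $R((AG)^g)=AT$, the operator $\Phi$ is block upper-triangular with diagonal blocks $\Psi:=(AG)^gAH|_{AT}$ and $I_S$. The block $\Psi$ is bijective on $AT$ because both $AH|_{AT}\colon AT\to AT$ (using $R(AH)=AT$, $N(AH)=S'$ and the splitting $AT\dotplus S'=Y$) and $(AG)^g|_{AT}\colon AT\to AT$ are bijective; hence $\Phi$ is invertible. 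Then I would set $D$ equal to the claimed right-hand side and verify the three defining relations $DAD=D$, $R(D)=T$, $N(D)=S'$, so that $D=A_{T,S'}^{(2)}$ by the uniqueness in $(\ref{1eqa})$. The computations are shortened by the identity $D=A_{T,S}^{(2)}-A_{T,S}^{(2)}\Phi^{-1}(I-AA_{T,S}^{(2)})$, which follows from $A_{T,S}^{(2)}(I-AA_{T,S}^{(2)})=0$ together with $\Phi^{-1}(AG)^gAF=I-\Phi^{-1}$.

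For the bounds (2) and (3), the key structural fact, which I would isolate first, is that $A_{T,S}^{(2)}$ and $A_{T,S'}^{(2)}$ coincide on $AT$, both acting there as the inverse of $A|_T$. Hence, writing $W=A_{T,S'}^{(2)}-A_{T,S}^{(2)}$ and splitting an arbitrary $\xi$ as $\xi=a'+s'$ with $a'=AA_{T,S'}^{(2)}\xi\in AT$ and $s'=(I-AA_{T,S'}^{(2)})\xi\in S'$, the common $AT$-contribution $A_{T,S}^{(2)}a'$ cancels and the difference collapses to $W\xi=-A_{T,S}^{(2)}s'$. Since $A_{T,S}^{(2)}$ annihilates $S$, this gives $\|W\xi\|\le\|A_{T,S}^{(2)}\|\,\mathrm{dist}(s',S)\le\|A_{T,S}^{(2)}\|\,\delta(S',S)\,\|s'\|$. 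To close the estimate I would bound $\|s'\|$ self-referentially from the identity $s'=(I-AA_{T,S}^{(2)})\xi+AA_{T,S}^{(2)}s'$, whose first term has norm $\le(1+\kappa)\|\xi\|$ and whose second has norm $\le\kappa\,\delta(S',S)\,\|s'\|$; solving yields $\|s'\|\le\frac{(1+\kappa)\|\xi\|}{1-\kappa\,\delta(S',S)}$. Combining the two displays and replacing $\delta(S',S)$ by $\hat\delta(S',S)$ gives bound (2), and (3) follows at once by the triangle inequality, the numerator $1+\hat\delta(S',S)$ arising from $(1+\kappa)\hat\delta(S',S)+(1-\kappa\,\hat\delta(S',S))$.

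I expect the invertibility of $\Phi$ to be the delicate step: the surjectivity argument used in Lemma~\ref{3L1} relied on $N(H)=S$ and does not transfer here, where $N(H)=S'$, so one must instead exploit the block-triangular structure over $Y=AT\dotplus S$ as above. By contrast, once the agreement of the two inverses on $AT$ is noticed, the perturbation bounds are comparatively short and produce the stated constants directly.
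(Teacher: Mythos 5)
Your proposal is correct and, in outline, follows the same route as the paper: existence of $A_{T,S'}^{(2)}$ via Lemma \ref{2L2} applied to the idempotent $I-AA_{T,S}^{(2)}$ (whose norm is at most $1+\kappa$), the representation via checking the defining relations (\ref{1eqa}) for the candidate $D$, and the bounds via the estimate $\mathrm{dist}(s',S)\le\delta(S',S)\|s'\|$ for $s'\in S'$ together with a self-referential inequality. Two points of comparison are worth recording. First, for the invertibility of $\Phi=I+(AG)^gAF$ the paper merely asserts that ``the similar method appeared in the proof of Lemma \ref{3L1}'' applies; you correctly diagnose that the surjectivity step there genuinely uses $N(H)=S$ and does not copy over when $N(H)=S'$ (the term $(AG)^gAHy_1$ with $y_1\in S$ no longer vanishes), and your block upper-triangular reading of $\Phi$ over $Y=AT\dotplus S$, with diagonal blocks $(AG)^gAH|_{AT}$ and $I_S$ and with $(AG)^gAH|_{AT}$ bijective on $AT$, is a clean and complete substitute --- it fills in a step the paper leaves implicit. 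Second, your bound derivation is the paper's in different clothing: the paper writes $W=A_{T,S}^{(2)}(B'-B)$ with $B'=I-AA_{T,S'}^{(2)}$ and $B=I-AA_{T,S}^{(2)}$ and estimates $\|A_{T,S}^{(2)}B'\xi\|$, which is exactly your $A_{T,S}^{(2)}s'$ with $s'=B'\xi$; your inequality $\|s'\|\le(1+\kappa)\|\xi\|+\kappa\,\delta(S',S)\|s'\|$ is equivalent to the paper's $\|B'\xi\|\le(1+\kappa)\|\xi\|+\|A\|\|W\xi\|$ and produces the same constants in (2) and (3). The only part you leave as a routine check --- that $R(D)=T$ and $N(D)=S'$ --- is carried out in the paper using $R((AG)^gAH)=R((AG)^g)$ and $AT\cap S=\{0\}$, and does go through from your identity $D=A_{T,S}^{(2)}-A_{T,S}^{(2)}\Phi^{-1}(I-AA_{T,S}^{(2)})$; no gap there.
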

{\em Proof.}
Let $P_{S,AT}=I-AA_{T,S}^{(2)}$ be an idempotent operator from $Y$ onto $S$ along $AT$. Since
$\|P_{S,AT}\|\leq 1+\|A\|\|A_{T,S}^{(2)}\|=1+\kappa$, we have $\hat\delta(S,S^\prime)\leq \dfrac{1}{1+\|P_{S,AT}\|}.$
So $Y=AT\dotplus S'$ by Lemma \ref{2L2}. Noting that $N(A)\cap T=\{0\}$, we get that $A_{T,S^\prime}^{(2)}$ exists.

Using the facts:
$$
AT\dotplus S=Y=AT\dotplus S',\quad N(A)\cap T=\{0\}
$$
and the similar method appeared in the proof of Lemma \ref{3L1}, we can deduce that $I+(AG)^gAF$ is invertible and
so is the operator $I+AF(AG)^g$.

Put $D=A_{T,S}^{(2)}(I+(AG)^gAF)^{-1}(AG)^gAH$. Then $R(D)\subset T$, $S'\subset N(D)$ and
\begin{align}
A_{T,S}^{(2)}&+A_{T,S}^{(2)}(I+(AG)^gAF)^{-1}(AG)^gAF(I-AA_{T,S}^{(2)})\notag\\
&=A_{T,S}^{(2)}+A_{T,S}^{(2)}(I+(AG)^gAF)^{-1}[I+(AG)^gAF-I](I-AA_{T,S}^{(2)})\notag\\
&=A_{T,S}^{(2)}(I+(AG)^gAF)^{-1}[I+(AG)^gAF-(I-AA_{T,S}^{(2)})]\notag\\
\label{3eqc}&=A_{T,S}^{(2)}(I+(AG)^gAF)^{-1}(AG)^gAH.
\end{align}
Clearly $DAD=D$ by (\ref{3eqc}). In order to obtain $A_{T,S}^{(2)}=D$, we need only to prove that $T\subset R(D)$ and
$S'\supset N(D)$.

Since $AT\dotplus S=Y$ and $N((AG)^g)=S$, $R(H)=T$, it follows that
$$
R((AG)^g)=(AG)^gAT=R((AG)^gAH)
$$
and hence
\begin{align*}
R(D)&=A_{T,S}^{(2)}(I+(AG)^gAF)^{-1}R((AG)^g)=R(A_{T,S}^{(2)}(AG)^g(I+AF(AG)^g)^{-1})\\
&=A_{T,S}^{(2)}R((AG)^g)=A_{T,S}^{(2)}AT=A_{T,S}^{(2)}(AT\dotplus S)=R(A_{T,S}^{(2)})=T.
\end{align*}

Now let $x\in N(D)$ and put $y=(I+(AG)^gAF)^{-1}(AG)^gAHx$. Then $y\in S$ and $y\in R((AG)^g)=AT$. So $y=0$ and
consequently, $(AG)^gAHx=0$. But this means that $AHx\in AT\cap N((AG)^g)=AT\cap S=\{0\}$. Thus $AHx=0$ and $Hx=0$.
Since $N(A)\cap T=\{0\}$, it follows that $x\in N(H)=S'$. Therefore,
$$
A_{T,S}^{(2)}=A_{T,S}^{(2)}(I+(AG)^gAF)^{-1}(AG)^gAH.
$$

Put $B^\prime=I-AA_{T,S^\prime}^{(2)},\ B=I-AA_{T,S}^{(2)}$. Note that
$$
W=A_{T,S}^{(2)}-A_{T,S^\prime}^{(2)}=A_{T,S}^{(2)}-A_{T,S}^{(2)}AA_{T,S^\prime}^{(2)}=
A_{T,S}^{(2)}(AA_{T,S}^{(2)}-AA_{T,S^\prime}^{(2)}).
$$
So $W=A_{T,S}^{(2)}(AA_{T,S}^{(2)}-AA_{T,S^\prime}^{(2)})=A_{T,S}^{(2)}(B^\prime-B)$. Since $B^\prime \xi \in S^\prime$,
$\forall\,\xi\in Y$, we have $dist(B'\xi,S)\leq\delta(S',S)\|B'\xi\|$. Thus, for any $\epsilon>0$, there is
$u\in Y$ such that $\|B^\prime \xi-Bu \|\leq \delta(S',S)\|B^\prime \xi\|+\epsilon$ and so that
$$
\|A_{T,S}^{(2)}(B^\prime \xi-Bu )\|\leq \delta(S',S)\|B^\prime \xi\|\|A_{T,S}^{(2)}\|+\|A_{T,S}^{(2)}\|\epsilon.
$$
Noting that $A_{T,S}^{(2)}B=0$, we have
\begin{align*}
\|W\xi \|&=\|A_{T,S}^{(2)}(B^\prime \xi -B \xi)\|=\|A_{T,S}^{(2)}(B^\prime \xi -Bu)\|\\
&\leq\delta(S',S)\|B^\prime \xi\|\|A_{T,S}^{(2)}\|+\|A_{T,S}^{(2)}\|\epsilon.
\end{align*}
But $\|B^\prime \xi\|\leq \|\xi\|+\|A\|\|A_{T,S}^{(2)}\xi-W\xi\|\leq (1+ \kappa )\|\xi\|+\|A\|\|W\xi\|.$ Thus,
\begin{equation}\label{3eqd}
\|W\xi\|\leq \delta(S',S) \|A_{T,S}^{(2)}\|((1+ \kappa )\|\xi\|+\|A\|\|W\xi\|)+\|A_{T,S}^{(2)}\|\epsilon.
\end{equation}
(\ref{3eqd}) indicates that
$
\|W\|\leq \dfrac{(1+ \kappa )\hat{\delta} (S^\prime,S) }{1- \kappa \hat{\delta} (S^\prime,S)  }\|A_{T,S}^{(2)}\|
$
and
$$
\|A_{T,S^\prime}^{(2)}\|\leq \|A_{T,S}^{(2)}\|+\|W\|\leq \frac{1+\hat{\delta }(S^\prime,S)}
{1-\kappa\hat{\delta} (S^\prime,S) }\|A_{T,S}^{(2)}\|.~~~~~~\cvd
$$

We now present our main result of this paper as follows.
\begin{theorem}\label{T3}
Let $A\in B(X,Y)$ and let $T\subset X$, $S\subset Y$ be closed subspaces such that $A_{T,S}^{(2)}$ exists.
Let $T'\subset X$, $S'\subset Y$ be closed subspaces such that
$\hat{\delta}(T,T^\prime)< \dfrac{1}{(1+ \kappa )^2}$ and $\hat{\delta}(S,S^\prime)< \dfrac{1}{3+ \kappa }$
respectively. Then $A_{T^\prime,S^\prime}^{(2)}$ exists and
\begin{align*}
&\begin{aligned}
 (1) \quad A_{T^\prime,S^\prime}^{(2)}&=A_{T,S}^{(2)}+(I-A_{T,S}^{(2)}A)F(I+(AG)^gAF)^{-1}(AG)^g \\
&+\{A_{T,S}^{(2)}+(I-A_{T,S}^{(2)}A)F(I+(AG)^gAF)^{-1}(AG)^g\}\\
& \times(I+(A\tilde{G})^gA\tilde{F})^{-1}(A\tilde{G})^gA\tilde{F}(I-AA_{T,S}^{(2)})(I+AF(AG)^g)^{-1}.
\end{aligned}\\
&(2) \quad \|A_{T^\prime,S^\prime}^{(2)}-A_{T,S}^{(2)}\| \leq \frac{(1+ \kappa )(\hat{\delta}(T,T^\prime)+\hat{\delta} (S^\prime,S)}
{1-(1+ \kappa )\hat{\delta}(T,T^\prime )- \kappa \hat{\delta} (S^\prime,S)}\|A^{(2)}_{T,S}\|.\\
&(3) \quad \|A_{T^\prime,S^\prime}^{(2)}\|\leq \frac{1+\hat{\delta} (S^\prime,S)}
{1-(1+ \kappa )\hat{\delta}(T,T^\prime )- \kappa \hat{\delta} (S^\prime,S)}\|A^{(2)}_{T,S}\|.
\end{align*}
where $G,\tilde{G},\tilde{H} \in B(Y,X)$ are such that $R(G)=T$, $R(\tilde{G})=R(\tilde{H})=T^\prime$, $N(G)=N(\tilde{G})=S$,
$N(\tilde{H})=S^\prime$ and $F=\tilde{G}-G$, $\tilde{F}=\tilde{H}-\tilde{G}$.
\end{theorem}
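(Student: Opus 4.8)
The plan is to realize $A_{T^\prime,S^\prime}^{(2)}$ as a two-stage perturbation: first move the range from $T$ to $T^\prime$ with $S$ fixed, using Lemma~\ref{3L1}, and then move the kernel from $S$ to $S^\prime$ with $T^\prime$ fixed, using Lemma~\ref{3L2}. First I would invoke Lemma~\ref{3L1}: since $\hat\delta(T,T^\prime)<\frac{1}{(1+\kappa)^2}$, the inverse $A_{T^\prime,S}^{(2)}$ exists, is given by the formula in Lemma~\ref{3L1}(1) with $G,\tilde G$ and $F=\tilde G-G$, and satisfies $\|A_{T^\prime,S}^{(2)}\|\le\frac{\|A_{T,S}^{(2)}\|}{1-(1+\kappa)\hat\delta(T,T^\prime)}$. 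The crucial preliminary observation is that the condition number of this intermediate inverse, $\kappa^\prime=\|A\|\,\|A_{T^\prime,S}^{(2)}\|$, is controlled: from the norm bound and $\hat\delta(T,T^\prime)<\frac{1}{(1+\kappa)^2}$ one has $1-(1+\kappa)\hat\delta(T,T^\prime)>\frac{\kappa}{1+\kappa}$, whence $\kappa^\prime\le\frac{\kappa}{1-(1+\kappa)\hat\delta(T,T^\prime)}<1+\kappa$. Consequently $2+\kappa^\prime<3+\kappa$, so the hypothesis $\hat\delta(S,S^\prime)<\frac{1}{3+\kappa}$ forces $\hat\delta(S,S^\prime)<\frac{1}{2+\kappa^\prime}$, which is exactly the threshold required to apply Lemma~\ref{3L2} with $T$ replaced by $T^\prime$. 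This gives the existence of $A_{T^\prime,S^\prime}^{(2)}$.

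For the explicit formula (1), I would write $A_{T^\prime,S^\prime}^{(2)}$ via Lemma~\ref{3L2}(1) applied to the pair $(T^\prime,S)$ with target kernel $S^\prime$, using $\tilde G$ (range $T^\prime$, kernel $S$) and $\tilde H$ (range $T^\prime$, kernel $S^\prime$), $\tilde F=\tilde H-\tilde G$; this yields $A_{T^\prime,S^\prime}^{(2)}=A_{T^\prime,S}^{(2)}+A_{T^\prime,S}^{(2)}(I+(A\tilde G)^gA\tilde F)^{-1}(A\tilde G)^gA\tilde F(I-AA_{T^\prime,S}^{(2)})$. Substituting the Lemma~\ref{3L1}(1) expression for $A_{T^\prime,S}^{(2)}$ into both occurrences produces the first, second and third summands of the stated formula, provided I can rewrite the trailing factor. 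The key algebraic identity I expect to need is
\begin{equation*}
I-AA_{T^\prime,S}^{(2)}=(I-AA_{T,S}^{(2)})(I+AF(AG)^g)^{-1},
\end{equation*}
which I would verify by setting $Q=AA_{T,S}^{(2)}=AG(AG)^g$ and $R=AF(AG)^g$, noting $AA_{T^\prime,S}^{(2)}=(Q+R)(I+R)^{-1}$ from the factorization $A_{T^\prime,S}^{(2)}=\tilde G(AG)^g(I+AF(AG)^g)^{-1}$ established in the proof of Lemma~\ref{3L1}, and computing $I-(Q+R)(I+R)^{-1}=(I-Q)(I+R)^{-1}$. This converts the trailing $(I-AA_{T^\prime,S}^{(2)})$ into $(I-AA_{T,S}^{(2)})(I+AF(AG)^g)^{-1}$, matching formula (1).

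For the bounds, I would use the triangle inequality through the intermediate inverse and feed in the one-step estimates. Writing $a=\hat\delta(T,T^\prime)$, $b=\hat\delta(S^\prime,S)$ and $t=(1+\kappa)a$, Lemma~\ref{3L1}(2),(3) give $\|A_{T^\prime,S}^{(2)}-A_{T,S}^{(2)}\|\le\frac{t}{1-t}\|A_{T,S}^{(2)}\|$ and $\|A_{T^\prime,S}^{(2)}\|\le\frac{1}{1-t}\|A_{T,S}^{(2)}\|$, while Lemma~\ref{3L2}(2) applied to $(T^\prime,S)$ gives $\|A_{T^\prime,S^\prime}^{(2)}-A_{T^\prime,S}^{(2)}\|\le\frac{(1+\kappa^\prime)b}{1-\kappa^\prime b}\|A_{T^\prime,S}^{(2)}\|$. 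Since $s\mapsto\frac{(1+s)b}{1-sb}$ is increasing, I would replace $\kappa^\prime$ by its upper bound $\frac{\kappa}{1-t}$ to obtain $\frac{(1+\kappa^\prime)b}{1-\kappa^\prime b}\le\frac{(1-t+\kappa)b}{1-t-\kappa b}$, then add the two contributions over the common denominator $(1-t)(1-t-\kappa b)$.

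The main obstacle is the bookkeeping in this last step. The combined numerator $(1-t+\kappa)b+t(1-t-\kappa b)$ must be shown to collapse, using $1-t+\kappa-t\kappa=(1+\kappa)(1-t)$ and then $t=(1+\kappa)a$, to $(1-t)(1+\kappa)(a+b)$; cancelling the factor $1-t$ leaves precisely $\frac{(1+\kappa)(a+b)}{1-t-\kappa b}\|A_{T,S}^{(2)}\|$, which is (2). For (3) I would instead combine Lemma~\ref{3L2}(3), namely $\|A_{T^\prime,S^\prime}^{(2)}\|\le\frac{1+b}{1-\kappa^\prime b}\|A_{T^\prime,S}^{(2)}\|$, with $\|A_{T^\prime,S}^{(2)}\|\le\frac{1}{1-t}\|A_{T,S}^{(2)}\|$ and $\kappa^\prime\le\frac{\kappa}{1-t}$; the factor $1-t$ again cancels and produces $\frac{1+b}{1-t-\kappa b}\|A_{T,S}^{(2)}\|$, which is (3). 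All the substantive work lies in verifying the applicability of Lemma~\ref{3L2} through the estimate $\kappa^\prime<1+\kappa$ and in this telescoping cancellation; the remaining manipulations are routine.
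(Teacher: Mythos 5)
Your proposal is correct and follows essentially the same route as the paper: first perturb $T$ to $T'$ with $S$ fixed via Lemma~\ref{3L1}, verify that the intermediate condition number $\kappa'=\|A\|\|A^{(2)}_{T',S}\|<1+\kappa$ so that Lemma~\ref{3L2} applies at $(T',S)$ under the hypothesis $\hat\delta(S,S')<\frac{1}{3+\kappa}$, and then combine the two one-step formulas and estimates. Your explicit verification of the identity $I-AA^{(2)}_{T',S}=(I-AA^{(2)}_{T,S})(I+AF(AG)^g)^{-1}$ is a detail the paper leaves implicit, and your use of Lemma~\ref{3L2}(3) for part (3) instead of the paper's triangle inequality yields the same bound.
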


{\em Proof.}
Since $\hat{\delta}(T,T^\prime)< \dfrac{1}{(1+ \kappa )^2}$, it follows from Lemma \ref{3L1} that $A_{T^\prime,S}^{(2)}$
exists and
\begin{align*}
\|A\|\|A^{(2)}_{T^\prime,S}\|&\leq \frac{ \kappa }{1-(1+ \kappa )\hat{\delta}(T,T^\prime)}<1+ \kappa\\
\hat{\delta}(S,S^\prime)&<\frac{1}{2+1+ \kappa }<\frac{1}{2+\|A\|\|A_{T^\prime,S}^{(2)}\|}.
\end{align*}
Thus, by Lemma \ref{3L2} we have that $A_{T^\prime,S^\prime}^{(2)}$ exists and
\begin{equation}\label{3eqe}
A_{T',S'}^{(2)}=A_{T',S}^{(2)}+A_{T',S}^{(2)}(I+(A\tilde{G})_gA\tilde{F})^{-1}(A\tilde{G})_gA\tilde{F}(I-AA_{T',S}^{(2)})
\end{equation}
by Lemma \ref{3L2}, where $\tilde G$, $\tilde H\in B(Y,X)$ with $R(\tilde{G})=T'$, $N(\tilde{G})=S$ and $R(\tilde{H})=T'$, $N(\tilde{H})=S'$
and $\tilde{F}=\tilde{H}-\tilde{G}$. By Lemma \ref{3L1}, we have
\begin{equation}\label{3eqf}
A_{T^\prime,S}^{(2)}=A_{T,S}^{(2)}+(I-A_{T,S}^{(2)}A)F(I+(AG)^gAF)^{-1}(AG)^gAA_{T,S}^{(2)}.
\end{equation}
Combining (\ref{3eqe}) with (\ref{3eqf}), we get that
\begin{align*}
A_{T^\prime,S^\prime}^{(2)}&=A_{T,S}^{(2)}+(I-A_{T,S}^{(2)}A)F(I+(AG)^gAF)^{-1}(AG)^g \\
&\ \,+\{A_{T,S}^{(2)}+(I-A_{T,S}^{(2)}A)F(I+(AG)^gAF)^{-1}(AG)^g\}\\
&\ \, \times(I+(A\tilde{G})^gA\tilde{F})^{-1}(A\tilde{G})^gA\tilde{F}(I-AA_{T,S}^{(2)})(I+AF(AG)^g)^{-1}.
\end{align*}
By Lemma \ref{3L1} and Lemma \ref{3L2}, we have
\begin{align*}
\|A_{T^\prime,S^\prime}^{(2)}&-A_{T,S}^{(2)}\|\leq \|A_{T^\prime,S^\prime}^{(2)}-A_{T^\prime,S}^{(2)}\|+\|A_{T^\prime,S}^{(2)}-A_{T,S}^{(2)}\| \\
&\leq \frac{(1+\|A\|\|A_{T^\prime,S}^{(2)}\|)\hat{\delta} (S^\prime,S)}{1-\|A\|\|A_{T^\prime,S}^{(2)}\|\hat{\delta} (S^\prime,S)  }\|A_{T^\prime,S}^{(2)}\|
+\frac{(1+ \kappa )\hat{\delta}(T,T^\prime )}{1-(1+ \kappa )\hat{\delta}(T,T^\prime)}\|A^{(2)}_{T,S}\| \\
&\leq \Big[\frac{(1+ \kappa )\hat{\delta} (S^\prime,S)(1-\hat{\delta}(T,T^\prime ))}
{\{1-(1+ \kappa )\hat{\delta}(T,T^\prime )- \kappa \hat{\delta} (S^\prime,S)\}\{1-(1+ \kappa )
\hat{\delta}(T,T^\prime )\}} \\
&\ \,+\frac{(1+ \kappa )\hat{\delta}(T,T^\prime )}{1-(1+ \kappa )\hat{\delta}(T,T^\prime)}\Big]\|A^{(2)}_{T,S}\| \\
&=\frac{(1+ \kappa )(\hat{\delta}(T,T^\prime)+\hat{\delta} (S^\prime,S))}
{1-(1+ \kappa )\hat{\delta}(T,T^\prime )- \kappa \hat{\delta}(S^\prime,S)}\|A^{(2)}_{T,S}\|,\\
\|A_{T^\prime,S^\prime}^{(2)}\|&\leq \|A_{T^\prime,S^\prime}^{(2)}-A_{T,S}^{(2)}\|+\|A^{(2)}_{T,S}\| \\
&\leq \frac{(1+ \kappa )(\hat{\delta}(T,T^\prime)+\hat{\delta} (S^\prime,S))}
{1-(1+ \kappa )\hat{\delta}(T,T^\prime )- \kappa \hat{\delta} (S^\prime,S)}\|A^{(2)}_{T,S}\| +\|A^{(2)}_{T,S}\| \\
&\leq \frac{1+\hat{\delta} (S^\prime,S)}
{1-(1+ \kappa )\hat{\delta}(T,T^\prime )- \kappa \hat{\delta} (S^\prime,S)}\|A^{(2)}_{T,S}\|.~~~~~~\cvd
\end{align*}

\begin{lemma}\label{3L3}
Let $A,\,\bar{A}=A+E\in B(X,Y)$ and $T\subset X ,S\subset Y$ be closed subspaces such that $A_{T,S}^{(2)}$ exists.
If $\|A_{T,S}^{(2)}\|\|E\|<1$,
then
$$\bar{A}_{T,S}^{(2)}=(I+A_{T,S}^{(2)}E)^{-1}A_{T,S}^{(2)}=A_{T,S}^{(2)}(I+EA_{T,S}^{(2)})^{-1}.$$
and
$$\|\bar{A}_{T,S}^{(2)}\|\leq\frac{\|A_{T,S}^{(2)}\|}{1-\|A_{T,S}^{(2)}\|\|E\|},\quad
\|\bar{A}_{T,S}^{(2)}-A_{T,S}^{(2)}\|\leq \frac{\|A_{T,S}^{(2)}\|^2\|E\|}{1-\|A_{T,S}^{(2)}\|\|E\|}.$$
\end{lemma}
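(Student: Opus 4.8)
The plan is to set $G := A_{T,S}^{(2)}$ and simply \emph{guess} the answer and verify it, rather than work through the existence criterion (\ref{1eqb}) abstractly. The candidate is
\begin{equation*}
D := (I+GE)^{-1}G,
\end{equation*}
and I would first settle the domain bookkeeping: since $G\in B(Y,X)$ and $E\in B(X,Y)$, the operator $GE$ acts on $X$ while $EG$ acts on $Y$. As $\|GE\|\le\|G\|\|E\|<1$ and $\|EG\|\le\|E\|\|G\|<1$, both $I+GE$ and $I+EG$ are invertible by Neumann series, so $D$ is well defined. The goal is then to check the three defining conditions $D\bar A D=D$, $R(D)=T$, $N(D)=S$; by the characterization (\ref{1eqa}) this both establishes that $\bar A_{T,S}^{(2)}$ exists and identifies it with $D$.

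Next I would record the intertwining identity $(I+GE)^{-1}G=G(I+EG)^{-1}$, which follows from $G(I+EG)=G+GEG=(I+GE)G$ upon multiplying by the two inverses. This yields the two equivalent forms of $D$ claimed in the statement and is precisely what lets me read off the range and kernel from the convenient side. Using $D=(I+GE)^{-1}G$ and the injectivity of the invertible operator $(I+GE)^{-1}$ on $X$, I obtain $N(D)=N(G)=S$; using $D=G(I+EG)^{-1}$ and the surjectivity of $(I+EG)^{-1}$ on $Y$, I obtain $R(D)=R(G)=T$.

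For the idempotent-type relation I would compute $D\bar A D$ directly, substituting $\bar A=A+E$ and invoking the single axiom $GAG=G$ (so that $GAG+GEG=G(I+EG)$):
\begin{equation*}
D\bar A D=(I+GE)^{-1}\big(GAG+GEG\big)(I+EG)^{-1}=(I+GE)^{-1}G(I+EG)(I+EG)^{-1}=D.
\end{equation*}
Hence $D=\bar A_{T,S}^{(2)}$, proving existence and the stated formula. The two norm bounds are then immediate from the Neumann estimate $\|(I+GE)^{-1}\|\le(1-\|G\|\|E\|)^{-1}$: the bound on $\|\bar A_{T,S}^{(2)}\|$ is read off at once, and for the difference I would write $\bar A_{T,S}^{(2)}-G=\big[(I+GE)^{-1}-I\big]G=-(I+GE)^{-1}GEG$ and take norms, giving the factor $\|G\|^2\|E\|$ in the numerator.

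The computation is short because everything collapses under the single relation $GAG=G$; there is no genuinely hard step. The only real care needed is the bookkeeping: tracking which space each composite operator acts on, and—when reading off $R(D)$ versus $N(D)$—selecting the correct one of the two equivalent expressions for $D$ so that the invertible factor can be cancelled on the appropriate side.
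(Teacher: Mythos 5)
Your proposal is correct and follows essentially the same route as the paper: both define the candidate $(I+A_{T,S}^{(2)}E)^{-1}A_{T,S}^{(2)}$, use the intertwining identity to get the second form, verify the three defining conditions of $\bar A_{T,S}^{(2)}$ from (\ref{1eqa}), and derive the norm bounds from the Neumann estimate together with $\bar A_{T,S}^{(2)}-A_{T,S}^{(2)}=-(I+A_{T,S}^{(2)}E)^{-1}A_{T,S}^{(2)}EA_{T,S}^{(2)}$. Your write-up merely spells out the range/kernel verification that the paper leaves as ``easy to verify.''
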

{\em Proof.}
$\|A_{T,S}^{(2)}\|\|E\|<1$ implies that $(I+A_{T,S}^{(2)}E)^{-1}$ exists. Since
$$
(I+A_{T,S}^{(2)}E)A_{T,S}^{(2)}=A_{T,S}^{(2)}(I+EA_{T,S}^{(2)}),
$$
we have
$$
(I+A_{T,S}^{(2)}E)^{-1}A_{T,S}^{(2)}=A_{T,S}^{(2)}(I+EA_{T,S}^{(2)})^{-1}.
$$
Put $B=(1+A_{T,S}^{(2)}E)^{-1}A_{T,S}^{(2)}$. Then $R(B)=R(A_{T,S}^{(2)})=T$, $N(B)=N(A_{T,S}^{(2)})=S$
and $B(A+E)B=B$. Therefore, $\bar{A}_{T,S}^{(2)}=(I+A_{T,S}^{(2)}E)^{-1}A_{T,S}^{(2)}$ with
\begin{align*}
\|\bar{A}_{T,S}^{(2)}\|&\leq\|(I+A_{T,S}^{(2)}E)^{-1}\|\|A_{T,S}^{(2)}\|
\leq\dfrac{\|A_{T,S}^{(2)}\|}{1-\|A_{T,S}^{(2)}\|\|E\|}\quad \text{and}\\
\|\bar{A}_{T,S}^{(2)}-A_{T,S}^{(2)}\|&=\|-(I+A_{T,S}^{(2)}E)^{-1}A_{T,S}^{(2)}EA_{T,S}^{(2)}\|
\leq \frac{\|A_{T,S}^{(2)}\|^2\|E\|}{1-\|A_{T,S}^{(2)}\|\|E\|}.~~~~~~\cvd
\end{align*}

We close this section by giving the perturbation analysis for $A_{T,S}^{(2)}$ when $T$, $S$ and $A$ all have small
perturbations.

\begin{theorem}\label{T7}
Let $A,\bar{A}=A+E\in B(X,Y)$ and let $T\subset X$, $S\subset Y$ be closed subspaces such that $A_{T,S}^{(2)}$ exists.
Let $T'\subset X$, $S'\subset Y$ be closed subspaces with
$\hat{\delta}(T,T^\prime)< \dfrac{1}{(1+ \kappa )^2}$ and $\hat{\delta}(S,S^\prime)< \dfrac{1}{3+ \kappa }$.
Suppose that $\|A_{T,S}^{(2)}\|\|E\|<\dfrac{2\kappa}{(1+\kappa)(4+\kappa)}$. Then
\begin{align*}
&\begin{aligned}
(1)~\bar{A}_{T^\prime,S^\prime}^{(2)}&=[1+A_{T,S}^{(2)}E+(I-A_{T,S}^{(2)}A)F(I+(AG)^gAF)^{-1}(AG)^g E\\
&\ \,+\{A_{T,S}^{(2)}+(I-A_{T,S}^{(2)}A)F(I+(AG)^gAF)^{-1}(AG)^g\}\\
&\ \, \times(I+(A\tilde{G})^gA\tilde{F})^{-1}(A\tilde{G})^gA\tilde{F}(I-AA_{T,S}^{(2)})(I+AF(AG)^g)^{-1}E]^{-1}\\
&\ \,\times [A_{T,S}^{(2)}+(I-A_{T,S}^{(2)}A)F(I+(AG)^gAF)^{-1}(AG)^g \\
&\ \,+\{A_{T,S}^{(2)}+(I-A_{T,S}^{(2)}A)F(I+(AG)^gAF)^{-1}(AG)^g\}\\
&\ \, \times(I+(A\tilde{G})^gA\tilde{F})^{-1}(A\tilde{G})^gA\tilde{F}(I-AA_{T,S}^{(2)})(I+AF(AG)^g)^{-1}].
\end{aligned}\\
&(2)~\|\bar{A}_{T^\prime,S^\prime}^{(2)}\|\leq \frac{(1+\hat{\delta} (S^\prime,S))\|A^{(2)}_{T,S}\|}
{1-(1+ \kappa )\hat{\delta}(T,T^\prime )- \kappa \hat{\delta} (S^\prime,S)-(1+\hat{\delta} (S^\prime,S))\|A^{(2)}_{T,S}\|\|E\|}.\\
&(3)~ \frac{\|\bar{A}_{T^\prime,S^\prime}^{(2)}-A_{T,S}^{(2)}\|}{\|A^{(2)}_{T,S}\|}\leq \frac{(1+ \kappa )
(\hat{\delta}(T,T^\prime)+\hat{\delta} (S^\prime,S)+(1+\hat{\delta}(S^\prime,S))\|A^{(2)}_{T,S}\|\|E\|}
{1-(1+ \kappa )\hat{\delta}(T,T^\prime )- \kappa \hat{\delta} (S^\prime,S)-(1+\hat{\delta} (S^\prime,S))\|A^{(2)}_{T,S}\|\|E\|}.
\end{align*}
where, $F=\tilde{G}-G,\,\tilde{F}=\tilde{H}-\tilde{G}$ and $G,\tilde{G},\tilde{H} \in B(Y,X)$ are arbitrary such that
$R(G)=T$, $R(\tilde{G})=R(\tilde{H})=T^\prime$, $N(G)=N(\tilde{G})=S$ and $N(\tilde{H})=S^\prime$.
\end{theorem}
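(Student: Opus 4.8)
The plan is to prove Theorem \ref{T7} by a two-step reduction, separating the perturbation of the subspaces from the perturbation of the operator. In the first step I would invoke Theorem \ref{T3}, which already guarantees that $A_{T',S'}^{(2)}$ exists under exactly the gap hypotheses $\hat\delta(T,T')<\frac{1}{(1+\kappa)^2}$ and $\hat\delta(S,S')<\frac{1}{3+\kappa}$, and which supplies the explicit formula for $A_{T',S'}^{(2)}$ together with the bounds on $\|A_{T',S'}^{(2)}\|$ and $\|A_{T',S'}^{(2)}-A_{T,S}^{(2)}\|$. In the second step I would regard $\bar A=A+E$ as a perturbation of $A$ with the now-fixed subspaces $T'$ and $S'$, and apply Lemma \ref{3L3} (with $A_{T,S}^{(2)}$ there playing the role of $A_{T',S'}^{(2)}$) to pass from $A_{T',S'}^{(2)}$ to $\bar A_{T',S'}^{(2)}$.

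Abbreviate $d=1-(1+\kappa)\hat\delta(T,T')-\kappa\hat\delta(S',S)$ and $s=\hat\delta(S',S)$. The crucial prerequisite for applying Lemma \ref{3L3} in the second step is the inequality $\|A_{T',S'}^{(2)}\|\|E\|<1$; verifying it from the stated assumptions is the main obstacle, and it is the reason for the specific constant $\frac{2\kappa}{(1+\kappa)(4+\kappa)}$. By Theorem \ref{T3}(3) one has $\|A_{T',S'}^{(2)}\|\le \frac{(1+s)\|A_{T,S}^{(2)}\|}{d}$, so it suffices to check $(1+s)\|A_{T,S}^{(2)}\|\|E\|<d$. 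The gap bounds give $(1+\kappa)\hat\delta(T,T')<\frac{1}{1+\kappa}$ and $\kappa\hat\delta(S',S)<\frac{\kappa}{3+\kappa}$, whence $d>\frac{2\kappa}{(1+\kappa)(3+\kappa)}$; on the other hand $1+s<\frac{4+\kappa}{3+\kappa}$, so the hypothesis $\|A_{T,S}^{(2)}\|\|E\|<\frac{2\kappa}{(1+\kappa)(4+\kappa)}$ yields $(1+s)\|A_{T,S}^{(2)}\|\|E\|<\frac{2\kappa}{(1+\kappa)(3+\kappa)}<d$. Thus $\|A_{T',S'}^{(2)}\|\|E\|<1$, equivalently the denominator $D:=d-(1+s)\|A_{T,S}^{(2)}\|\|E\|$ appearing in (2) and (3) is strictly positive, and Lemma \ref{3L3} applies.

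With applicability secured, part (1) follows by writing $\bar A_{T',S'}^{(2)}=(I+A_{T',S'}^{(2)}E)^{-1}A_{T',S'}^{(2)}$ from Lemma \ref{3L3} and substituting the explicit expression for $A_{T',S'}^{(2)}$ furnished by Theorem \ref{T3}(1); this is purely a matter of transcription. For part (2) I would use the norm estimate $\|\bar A_{T',S'}^{(2)}\|\le \frac{\|A_{T',S'}^{(2)}\|}{1-\|A_{T',S'}^{(2)}\|\|E\|}$ of Lemma \ref{3L3}. Since $t\mapsto \frac{t}{1-t\|E\|}$ is increasing on $[0,\|E\|^{-1})$, I may replace $\|A_{T',S'}^{(2)}\|$ by its Theorem \ref{T3}(3) upper bound $\frac{(1+s)\|A_{T,S}^{(2)}\|}{d}$, and the resulting fraction simplifies to $\frac{(1+s)\|A_{T,S}^{(2)}\|}{D}$, which is precisely the bound asserted in (2).

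For part (3) I would split through $A_{T',S'}^{(2)}$ via $\|\bar A_{T',S'}^{(2)}-A_{T,S}^{(2)}\|\le \|\bar A_{T',S'}^{(2)}-A_{T',S'}^{(2)}\|+\|A_{T',S'}^{(2)}-A_{T,S}^{(2)}\|$, where the second term is controlled directly by Theorem \ref{T3}(2). For the first term I would use the identity $\bar A_{T',S'}^{(2)}-A_{T',S'}^{(2)}=-\bar A_{T',S'}^{(2)}EA_{T',S'}^{(2)}$ (read off from Lemma \ref{3L3}) to obtain $\|\bar A_{T',S'}^{(2)}-A_{T',S'}^{(2)}\|\le \|\bar A_{T',S'}^{(2)}\|\,\|E\|\,\|A_{T',S'}^{(2)}\|$, then bound $\|A_{T',S'}^{(2)}\|$ by its Theorem \ref{T3}(3) estimate while leaving $\|\bar A_{T',S'}^{(2)}\|$ as an unknown. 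Writing $W=\|\bar A_{T',S'}^{(2)}-A_{T,S}^{(2)}\|$ and using $\|\bar A_{T',S'}^{(2)}\|\le W+\|A_{T,S}^{(2)}\|$ produces a self-referential linear inequality in $W$; solving it (the coefficient of $W$ being $<1$ exactly because $D>0$) gives $\frac{W}{\|A_{T,S}^{(2)}\|}\le \frac{(1+\kappa)(\hat\delta(T,T')+\hat\delta(S',S))+(1+s)\|A_{T,S}^{(2)}\|\|E\|}{D}$, which is the claim in (3). The only genuinely delicate point throughout is the positivity of $D$ established in the second paragraph; the remaining steps are monotonicity arguments and algebraic rearrangements.
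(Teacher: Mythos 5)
Your proposal is correct and follows essentially the same route as the paper's own proof: Theorem \ref{T3} handles the subspace perturbation, Lemma \ref{3L3} then handles the operator perturbation, and your verification that $\|A_{T',S'}^{(2)}\|\|E\|<1$ (via $1-(1+\kappa)\hat\delta(T,T')-\kappa\hat\delta(S',S)>\frac{2\kappa}{(1+\kappa)(3+\kappa)}$ and $1+\hat\delta(S',S)<\frac{4+\kappa}{3+\kappa}$) is exactly the computation the paper leaves implicit. The only cosmetic difference is in part (3), where the paper uses the identity $\bar A_{T',S'}^{(2)}-A_{T,S}^{(2)}=(I+A_{T',S'}^{(2)}E)^{-1}\bigl(A_{T',S'}^{(2)}-A_{T,S}^{(2)}-A_{T',S'}^{(2)}EA_{T,S}^{(2)}\bigr)$ in place of your triangle-inequality and self-referential estimate in $W$, but both routes produce the identical final bound.
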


{\em Proof.}
We have $A_{T^\prime,S^\prime}^{(2)}$ exists and
$\|A_{T^\prime,S^\prime}^{(2)}\|\leq \dfrac{(1+\hat{\delta} (S^\prime,S))\|A^{(2)}_{T,S}\|}
{1-(1+ \kappa )\hat{\delta}(T,T^\prime )- \kappa \hat{\delta} (S^\prime,S)}$ by Theorem \ref{T3}. Thus,
$\|A_{T^\prime,S^\prime}^{(2)}\|\|E\|<\dfrac{(1+\kappa)(4+\kappa)\|A^{(2)}_{T,S}\|\|E\|}{2\kappa}<1$ and hence
$\bar{A}_{T^\prime,S^\prime}^{(2)}$ exists with
$\bar{A}_{T^\prime,S^\prime}^{(2)}=(I+A_{T^\prime,S^\prime}^{(2)}E)^{-1}A_{T^\prime,S^\prime}^{(2)}$
by Lemma \ref{3L3}. It follows from Lemma \ref{3L1} and \ref{3L2} that
\begin{align*}
\bar{A}_{T^\prime,S^\prime}^{(2)}&=[I+A_{T,S}^{(2)}E+(I-A_{T,S}^{(2)}A)F(I+(AG)^gAF)^{-1}(AG)^g E\\
&\ \,+\{A_{T,S}^{(2)}+(I-A_{T,S}^{(2)}A)F(I+(AG)^gAF)^{-1}(AG)^g\}\\
&\ \, \times(I+(A\tilde{G})^gA\tilde{F})^{-1}(A\tilde{G})^gA\tilde{F}(I-AA_{T,S}^{(2)})(I+AF(AG)^g)^{-1}E]^{-1}\\
&\ \,\times [A_{T,S}^{(2)}+(I-A_{T,S}^{(2)}A)F(I+(AG)^gAF)^{-1}(AG)^g \\
&\ \,+\{A_{T,S}^{(2)}+(I-A_{T,S}^{(2)}A)F(I+(AG)^gAF)^{-1}(AG)^g\}\\
&\ \, \times(I+(A\tilde{G})^gA\tilde{F})^{-1}(A\tilde{G})^gA\tilde{F}(I-AA_{T,S}^{(2)})(I+AF(AG)^g)^{-1}].
\end{align*}
Furthermore,
\begin{align*}
\|\bar{A}_{T^\prime,S^\prime}^{(2)}\|&\leq\frac{\|A_{T^\prime,S^\prime}^{(2)}\|}{1-\|A_{T^\prime,S^\prime}^{(2)}\|\|E\|} \\
&\leq \frac{(1+\hat{\delta} (S^\prime,S))\|A^{(2)}_{T,S}\|}
{1-(1+ \kappa )\hat{\delta}(T,T^\prime )- \kappa \hat{\delta} (S^\prime,S)-(1+\hat{\delta} (S^\prime,S))\|A^{(2)}_{T,S}\|\|E\|}.
\end{align*}
Notice that
\begin{align*}
\bar{A}_{T^\prime,S^\prime}^{(2)}-A_{T,S}^{(2)}&=(I+A_{T^\prime,S^\prime}^{(2)}E)^{-1}A_{T^\prime,S^\prime}^{(2)}-A_{T,S}^{(2)}\\
&=(I+A_{T^\prime,S^\prime}^{(2)}E)^{-1}(A_{T^\prime,S^\prime}^{(2)}-(I+A_{T^\prime,S^\prime}^{(2)}E)A_{T,S}^{(2)})\\
&=(I+A_{T^\prime,S^\prime}^{(2)}E)^{-1}(A_{T^\prime,S^\prime}^{(2)}-A_{T,S}^{(2)}-A_{T^\prime,S^\prime}^{(2)}EA_{T,S}^{(2)}).
\end{align*}
Thus we have
\begin{align*}
\|\bar{A}_{T^\prime,S^\prime}^{(2)}\!-\!A_{T,S}^{(2)}\|\!
&\leq\! \|(I+A_{T^\prime,S^\prime}^{(2)}E)^{-1}\|(\|A_{T^\prime,S^\prime}^{(2)}-A_{T,S}^{(2)}\|+\|A_{T^\prime,S^\prime}^{(2)}EA_{T,S}^{(2)}\|)\\
\!&\leq\! \frac{1}{1-\|A_{T^\prime,S^\prime}^{(2)}\|\|E\|}(\|A_{T^\prime,S^\prime}^{(2)}-A_{T,S}^{(2)}\|+\|A_{T^\prime,S^\prime}^{(2)}\|\|E\|\|A_{T,S}^{(2)}\|)\\
\!&\leq\! \frac{(1\!+\!\kappa )(\hat{\delta}(T,T^\prime)\!+\!\hat{\delta} (S^\prime,S)\!+\!(1\!+\!\hat{\delta} (S^\prime,S))\|A^{(2)}_{T,S}\|\|E\|}
{1\!-\!(1\!+\!\kappa )\hat{\delta}(T,T^\prime )\!-\!\kappa \hat{\delta} (S^\prime,S)\!-\!(1\!+\!\hat{\delta} (S^\prime,S))\|A^{(2)}_{T,S}\|\|E\|}
\|A^{(2)}_{T,S}\|.~~~~\cvd
\end{align*}

\vskip0.2cm
{\bf{Acknowledgement.}} The authors thank to referees  for their helpful comments and suggestions.

\end{document}